\newcommand{\ity}{\infty}
\newcommand{\C}{\mathbb{C}}
\newcommand{\N}{\mathbb{N}}
\numberwithin{equation}{section}
\newtheorem{theorem}{Theorem}[section]
\theoremstyle{remark}
\newtheorem{remark}[theorem]{Remark}
\newtheorem{example}[theorem]{Example}
\thanks {The research work of first author is supported by SMS grant of BIT Mesra, Ranchi, India. The research work of second  author is supported by research fellowship from National Board for Higher Mathematics (NBHM), Mumbai, India}
\begin{document}
\title [bungee set, escaping set and filled julia set of entire function]{dynamics on bungee set, escaping set and filled julia set of composite  transcendental entire functions }

\author[D. Kumar]{Dinesh Kumar}
\address{Department of Mathematics, Birla Institute of Technology Mesra
Ranchi--835 215, India}

\email{dineshkumar@bitmesra.ac.in }

\author[R. Kaur]{Ramanpreet Kaur}
\address{Department of Mathematics, University of Delhi,
Delhi--110 007, India}

\email{preetmaan444@gmail.com}

%
%

\begin{abstract}
In this paper, we have explored some of the  properties of the Bungee set of a transcendental entire function. We have provided a class of permutable entire functions for which their Bungee sets are equal. Moreover, we have obtained a result on permutable entire functions for which the escaping set of the composite entire function equals the union of the escaping sets of the two functions.  In addition, we provide an important relation between the Bungee set of composite entire function with the Bungee set of  individual functions.
\end{abstract}

\keywords{bungee set, filled Julia set, escaping set, completely invariant, asymptotic value}

\subjclass[2020]{37F10, 30D05}

\maketitle

\section{introduction}\label{sec1}

Let $f^n$ denote the $n$-th iterate of a transcendental entire function. In complex dynamics, we usually study about the Fatou set denoted by $F(f)$ (where the dynamics is stable)  and the Julia set denoted by $J(f)$ (where the dynamics is unstable). The Fatou set and the Julia set partitions the complex plane into two disjoint sets.  For an introduction to the basic properties of these sets one can refer to \cite{berg1}. We can also partition the complex plane by considering the nature of the orbit of a point. Here, by considering the nature of the orbit of a point, we mean that we consider three different set of points which can be defined as follows:
\begin{enumerate}
\item  Escaping set (denoted by $I(f),$ contains all those points whose orbits escape to infinity) which was first introduced by Eremenko \cite{e1};
\item  Filled Julia set (contains all those points whose orbit is bounded, denoted by $K(f))$;
\item  Bungee set (contains all those points whose orbit contains at least two subsequences such that one subsequence is bounded and other escapes to infinity), denoted by $BU(f)$. 
\end{enumerate}
From the above definition of sets, we can say that $BU(f)= \C \setminus I(f)\cup K(f).$ The set $K(f)$ has been extensively studied  for  a non-linear polynomial $f$ but has not been explored much when $f$ is a transcendental entire function.  Some of the   topological properties of the set $K(f)$ where $f$ is a transcendental entire function has been discussed in \cite{osb1} (see the references therein for more information on $K(f)$). \\
Bungee set of a polynomial of degree at least $2$ is not of much interest since it turns out that if $f$ is a polynomial of degree at least $2$ then $BU(f)=\emptyset.$ However, for a rational map $R$ of degree at least $2$ which is not a polynomial, $BU(R)$ can be non-empty. For instance, consider $R(z)=1/z^d, d\geq 2.$ It can be easily seen that $BU(R)=\{z\in\C: |z|<1\cup |z|>1\}.$
The basic properties of the Bungee set of a transcendental entire function $f$ are listed as follows \cite{osb2}:
\begin{enumerate}
\item [(i)] $BU(f)\not=\emptyset;$
\item [(ii)] $BU(f)\cap J(f)\not=\emptyset.$
\end{enumerate}
It was also observed in \cite{e2} that 
$BU(f)\cap F(f)\not=\emptyset,$ for some  transcendental entire function $f$ (though notion of $BU(f)$ was not formally defined at that time).
The proof of first two properties follows from the fact that there always exists an element in $J(f)$ whose orbit is dense in $J(f)$ \cite{dom1}. 

Osborne and Sixsmith formally defined the notion of Bungee set of a transcendental entire function \cite{osb2}. They proved the following theorem which gives a connection between a Fatou component and the Bungee set:
\begin{theorem}\label{Thm}\cite{osb2}
Let $f$ be a transcendental entire funtion such that $U\cap BU(f)\not=\emptyset$, where $U\subset F(f)$. Then 
\begin{enumerate}
\item [(a)] $U\subset BU(f)$ and U is a wandering domain of $f$;
\item [(b)] $J(f)=\partial BU(f)$.
\end{enumerate}
\end{theorem}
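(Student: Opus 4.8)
The plan is to extract both parts from a single structural principle: every connected component of $F(f)$ lies entirely inside one of the three sets $I(f)$, $K(f)$, $BU(f)$. First I would prove this dichotomy via normality. Fix a component $U$ of $F(f)$ and a point $z_0\in U\cap BU(f)$, so that the orbit of $z_0$ contains an escaping subsequence $f^{n_k}(z_0)\to\infty$ and a bounded subsequence $f^{m_j}(z_0)$. Since $\{f^n\}$ is normal on $U$, I would pass to subsequences converging locally uniformly in the spherical metric; the limit of a normal family of holomorphic maps is either a holomorphic $\C$-valued function or the constant $\infty$. The escaping subsequence forces a spherical sub-limit $g$ with $g(z_0)=\infty$, hence $g\equiv\infty$ on $U$, so $f^{n_k}(z)\to\infty$ for every $z\in U$. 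The bounded subsequence forces a sub-limit $h$ finite at $z_0$, hence holomorphic, so along that subsequence every orbit in $U$ is bounded. Thus every $z\in U$ has both an escaping and a bounded subsequence, giving $U\subset BU(f)$. The identical spherical-limit argument yields $U\cap I(f)\neq\emptyset\Rightarrow U\subset I(f)$ and $U\cap K(f)\neq\emptyset\Rightarrow U\subset K(f)$, which is the full trichotomy.

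For the wandering-domain claim I would use the elementary observation that membership of $z$ in $I(f)$, $K(f)$ or $BU(f)$ depends only on the tail of its orbit; hence all three sets are completely invariant, and in particular $z\in BU(f)\iff f(z)\in BU(f)$. If $U$ were eventually periodic, some forward image $f^m(U)$ would lie in a periodic component $V$, which by the classification of periodic Fatou components for entire maps (attracting, parabolic, Siegel, Baker; no Herman rings) satisfies $V\subset K(f)$ or $V\subset I(f)$, so $V\cap BU(f)=\emptyset$. But picking $z\in U\subset BU(f)$ and using forward invariance gives $f^m(z)\in V\cap BU(f)$, a contradiction. Hence $U$ is a wandering domain, completing (a).

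For (b), the inclusion $\partial BU(f)\subset J(f)$ is immediate from the trichotomy: a Fatou point $z$ lies in a component entirely inside $BU(f)$ or entirely inside $I(f)\cup K(f)=\C\setminus BU(f)$, so $z$ is interior either to $BU(f)$ or to its complement and cannot be a boundary point. For the reverse inclusion I would fix $z\in J(f)$ and a neighbourhood $N$ and produce in $N$ both a $BU(f)$-point and a non-$BU(f)$-point. Using the blow-up property of the Julia set, $\bigcup_{k\ge 0}f^k(N)$ omits at most one point of $\C$ and therefore meets the open set $U$; choosing $w\in N$ with $f^k(w)\in U\subset BU(f)$ and invoking backward invariance of $BU(f)$ gives $w\in N\cap BU(f)$. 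For a non-$BU(f)$ point I would invoke Eremenko's theorem $J(f)=\partial I(f)$: since $z\in\partial I(f)$, the neighbourhood $N$ meets $I(f)$, and $I(f)\cap BU(f)=\emptyset$. Hence $z\in\partial BU(f)$, and $J(f)=\partial BU(f)$.

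The main obstacle is the normality step underpinning the trichotomy: justifying that the spherical sub-limits of the iterates are either holomorphic or identically $\infty$, and upgrading the purely pointwise subsequential behaviour at $z_0$ to locally uniform behaviour across all of $U$. Once this is secured, the complete invariance of $BU(f)$ together with the blow-up property and Eremenko's $J(f)=\partial I(f)$ assemble both parts with little additional effort.
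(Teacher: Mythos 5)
The paper does not prove this theorem at all: it is quoted verbatim from Osborne and Sixsmith \cite{osb2}, so there is no internal proof to compare against. Judged on its own, your argument is correct and is essentially the standard one from the original source. The normality/spherical-limit dichotomy (a locally uniform spherical limit of holomorphic maps is holomorphic or identically $\infty$, by Hurwitz applied to the reciprocals) does exactly what you want: it upgrades the pointwise subsequential behaviour at $z_0$ to all of $U$ and yields the full trichotomy that every Fatou component lies in exactly one of $I(f)$, $K(f)$, $BU(f)$; this trichotomy then gives both $U\subset BU(f)$ and $\partial BU(f)\subset J(f)$, while the blow-up property together with backward invariance of $BU(f)$ and Eremenko's $J(f)=\partial I(f)$ give the reverse inclusion. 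The only step I would ask you to write out is the Baker-domain case of the wandering-domain argument: you need the \emph{full} orbit of a point in a periodic cycle of Baker domains to escape, not merely the subsequence $f^{np}$; this follows because every component $V_j$ of the cycle is itself a Baker domain, so $f^{np+j}(z)=f^{np}\bigl(f^{j}(z)\bigr)\to\infty$ for each fixed $0\le j<p$, whence $V\subset I(f)$ and $V\cap BU(f)=\emptyset$. With that line added, the proof is complete.
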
 

One of our principal aim is to discuss some of the basic properties satisfied by $BU(f),$ where $f$ is transcendental entire. After studying the properties of $BU(f)$, it is natural to look for the relation between Bungee set of  the composition of transcendental entire functions with the Bungee set of individual transcendental entire function. Moreover, these kind of relations are also important if one wants to talk about the Bungee set of a finitely generated semigroup of transcendental entire functions. (It is to be  mentioned that Hinkkanen and Martin \cite{martin} did the seminal work for semigroups of rational functions where they extended  the dynamics of a rational function of one complex variable to an arbitrary semigroup of rational functions. The second author got motivated from \cite{martin} and did some work on semigroups of transcendental entire functions, see \cite{ {dinesh1}, {dinesh3}, {dinesh4}}). Recall that, two entire functions $f$ and $g$ are called permutable (commuting) if $f\circ g=g\circ f$. We have provided a class of permutable entire functions for which Bungee sets are equal. Moreover, we have given a class of permutable entire functions for which the escaping set of the composite entire function equals the union of the escaping sets of the two functions. In addition, we provide an important relation between Bungee set of composite entire function with those of the individual functions. Throughout the paper, by an entire function we would mean transcendental entire, unless otherwise stated.

\section{basic properties and results on bungee set}
Recall that, a set $W$ is forward invariant under a function $g$ if $g(W)\subset W$  and $W$ is backward invariant under $g$ if $g^{-1}(W)=\{w\in\C:g(w)\in W\}\subset W.$ The set $W$ is called completely invariant under $g$ if it is both forward and backward invariant under $g.$
Also, the dynamics of an entire function $f$ is to a large extent controlled by the presence of singular values. They are of two types: critical value and asymptotic value. A critical value of $f$ is the image of a critical point, while $z_0\in\C$ is called an asymptotic value of $f$ if there is a curve $\gamma\to\ity$ such that $f(z)\to z_0$ as $z\to\ity$ along $\gamma.$ The union of critical values, asymptotic values and their finite limit points constitute the set of singular values of $f$ which is usually denoted by Sing($f^{-1}$) \cite{morosowa}.
 In \cite{osb2}, it was observed that $BU(f)$ is a completely invariant set. Also, $BU(f) = BU(f^n), n\in\N.$
These are simple consequences of  the definition of $BU(f)$
and the corresponding properties for $I(f)$ and $K(f).$ 

  For a transcendental entire function $f,$ Baker and Dominguez \cite{dom1} considered the following subset of $J(f):$
\[ D(f)=\{z\in J(f): O(z)\mbox{ is dense in } J(f), \mbox{where } O(z) \text{ denotes  orbit of point } z\}. 
\]
 It is easy to see that $D(f)$ is  a dense $G_{\delta}$ subset of $J(f)$ in the relative topology of $J(f)$. This, in particular, establishes that $BU(f)\neq\emptyset$ and an infinite set which is unbounded \cite{dom1}. 

We now provide a class of permutable transcendental entire functions for which corresponding Bungee sets are equal.
\begin{theorem}\label{thm2}
Let $f$ and $g$ be two permutable entire functions such that $g(z)=af(z)+b$, where $a\not=0, |a|<1$, and the growth of  sequence $\{f^n\}$ is sufficiently large than that of sequence $\{a^n\}$. Then $BU(f)=BU(g)$.
\end{theorem}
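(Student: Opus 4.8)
The plan is to reduce the statement to the two identities $I(f)=I(g)$ and $K(f)=K(g)$, since $BU(f)=\C\setminus(I(f)\cup K(f))$ and $BU(g)=\C\setminus(I(g)\cup K(g))$. The engine driving everything is a closed form for the iterates of $g$ in terms of those of $f$. First I would prove by induction on $n$ that
\begin{equation}\label{key}
g^n(z)=a^n f^n(z)+b\,\frac{1-a^n}{1-a},\qquad n\in\N.
\end{equation}
The base case $n=1$ is the defining relation $g=af+b$. For the inductive step the crucial ingredient is permutability: $f\circ g=g\circ f$ yields $f\circ g^n=g^n\circ f$, so that $f(g^n(z))=g^n(f(z))$; substituting the inductive hypothesis for $g^n(f(z))$ into $g^{n+1}(z)=a\,f(g^n(z))+b$ gives $g^{n+1}(z)=a^{n+1}f^{n+1}(z)+b\,\frac{1-a^{n+1}}{1-a}$ after simplifying the geometric sum.

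Next I would exploit $|a|<1$. The sequence $c_n:=b\,\frac{1-a^n}{1-a}$ converges to $b/(1-a)$ and is therefore bounded, while $a^n\to0$. Consequently, by \eqref{key}, the orbit $\{g^n(z)\}$ differs from $\{a^n f^n(z)\}$ only by the bounded perturbation $\{c_n\}$, so $\{g^n(z)\}$ is bounded (respectively, tends to $\ity$, respectively, has both a bounded subsequence and a subsequence tending to $\ity$) if and only if $\{a^n f^n(z)\}$ has the corresponding property. This isolates the whole problem into comparing the scalar weights $|a|^n$ with the growth of $|f^n(z)|$ along the orbit of $z$.

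The heart of the argument, and the step I expect to be the main obstacle, is translating the hypothesis that the growth of $\{f^n\}$ exceeds that of $\{a^n\}$ into the precise dichotomy that $|a^n f^n(z)|=|a|^n|f^n(z)|$ is bounded exactly when $\{f^n(z)\}$ is bounded, and tends to $\ity$ exactly when $\{f^n(z)\}$ does. The forward implications are routine: if $\{f^n(z)\}$ is bounded then $a^nf^n(z)\to0$, and if $f^n(z)\to\ity$ then the growth hypothesis forces $|a|^n|f^n(z)|\to\ity$. The delicate part is the converses, where one must rule out the intermediate behaviour in which $|f^n(z)|$ is unbounded yet grows no faster than $(1/|a|)^n$; this is exactly the obstruction the growth hypothesis is designed to remove, and extra care is needed when passing to subsequences for the bungee alternative.

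Finally, combining the last two paragraphs gives $z\in I(f)\iff z\in I(g)$ and $z\in K(f)\iff z\in K(g)$, whence $I(f)=I(g)$ and $K(f)=K(g)$. Taking complements yields $BU(f)=\C\setminus(I(f)\cup K(f))=\C\setminus(I(g)\cup K(g))=BU(g)$, as required.
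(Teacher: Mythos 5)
Your closed form $g^n(z)=a^nf^n(z)+b\frac{1-a^n}{1-a}$ is exactly the identity the paper uses (there written as $g^n=p^n\circ f^n$ with $p(z)=az+b$), and your induction via $f\circ g^n=g^n\circ f$ is the same use of permutability. Where you genuinely diverge is in the overall architecture: the paper works directly with the definition of $BU$, taking a point of $BU(f)$ and transferring its bounded subsequence (using $|a|^{n}\le 1$ and the geometric bound on the constants) and its escaping subsequence (using the growth hypothesis) to the $g$-orbit, then asserting the reverse inclusion is ``similar''; you instead reduce to the stronger pair of identities $I(f)=I(g)$ and $K(f)=K(g)$ and take complements in the three-way partition. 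Your route, if completed, proves more than the theorem asks, and it makes visible a point the paper glosses over: because $BU$ is defined by exclusion, \emph{both} inclusions $I(g)\subset I(f)$ and $K(g)\subset K(f)$ are needed even for the single containment $BU(f)\subset BU(g)$, and these are not symmetric with the easy directions since passing from $g^n$ back to $f^n$ divides by $a^n$.

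The one real shortfall is that you flag these converse inclusions as ``the heart of the argument'' and ``the main obstacle'' but do not actually carry them out, and your reduction does not work without them. They do go through, and you should write them down. For $I(g)\subset I(f)$: since $|a|^{-n}\ge 1$, the identity gives $|f^n(z)|=|a|^{-n}\bigl|g^n(z)-c_n\bigr|\ge |g^n(z)|-s$ with $s=|b|/(1-|a|)$, so $g^n(z)\to\infty$ forces $f^n(z)\to\infty$; no growth hypothesis is needed here. For $K(g)\subset K(f)$, argue by contraposition: if $\{f^n(z)\}$ is unbounded, choose a subsequence $f^{m_k}(z)\to\infty$; the growth hypothesis (which, as in the paper's own use of it, must be read as applying along subsequences) gives $|a^{m_k}f^{m_k}(z)|\to\infty$, hence $|g^{m_k}(z)|\ge |a^{m_k}f^{m_k}(z)|-s\to\infty$ and $z\notin K(g)$. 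With these two paragraphs inserted, your argument is complete and is, if anything, a cleaner and more informative proof than the one in the paper.
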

\begin{proof}
 We prove this result by showing that $BU(f)\subset BU(g)$
and vice-versa. For this, suppose that $z_0\in BU(f).$ Then there exists two subsequences $\{m_k\}, \{n_k\}$ and a constant $R>0$ such that $f^ {m_k}(z_0)\to \infty$ as $k\to\infty$ and $|f^{n_k}(z_0)|<R $ for all $k=1,2,3,\ldots$ As $g(z)= af(z)+b$, on taking $p(z)=az+b$ we get that $g(z)=p(f(z))$. Using induction, we get $g^n(z)=p^n(f^n(z)), n\in\mathbb{N}$. Now, consider\\
$\begin{array}{lll}
|g^{n_k}(z_0)|&=&|p^{n_k}(f^{n_k}(z_0))|\\
            &=&|a^{n_k}f^{n_k}(z_0)+b(1+a+\cdots+a^{n_k-1})|\\
            &\leq&|a^{n_k}||f^{n_k}(z_0)|+|b|(1+|a|+\cdots+|a|^{n_k-1})\\
            &\leq& rR+s=T \,\mbox{(say)}\\
\end{array}$\\
where $|a^{n_k}|\leq r (say)$ and $ |b|(1+|a|+\cdots+|a|^{n_k-1})\leq\frac{|b|}{1-|a|}= s(say)$ as $k\to\infty$.
Also,\\
$\begin{array}{lll}
|g^{m_k}(z_0)|&=&|p^{m_k}(f^{m_k}(z_0))|\\
            &=&|a^{m_k}f^{m_k}(z_0)+b(1+a+\cdots+a^{m_k-1})|\\
            &\geq& |a^{m_k}||f^{m_k}(z_0)|-|b|(1+|a|+\cdots+|a|^{m_k-1})\\
            &\geq& |a^{m_k}||f^{m_k}(z_0)|-|b|\frac{1}{1-|a|}

\end{array}$\\
Using the hypothesis, growth of the sequence $\{f^{m_k}\}$ is sufficiently large than that of sequence $\{a^{m_k}\}$ and hence $|g^{m_k}(z_0)|\to \infty$ as $k\to\infty.$
Hence $z_0\in BU(g)$, i.e., $BU(f)\subset BU(g)$. On  similar lines, one can show that $BU(g)\subset BU(f)$ and this proves the result.
\end{proof} 

Recently, it was proved by  Singh \cite{aps},  that for any bounded Fatou component $P$ of a transcendental entire function $f$, we have $\partial P\cap BU(f)=\emptyset$. In the same paper, he raised the following question.\\ 
\textbf{Problem}: If $U$ is an unbounded Fatou component of a transcendental entire function $f$ then can we still say that  $\partial P\cap BU(f)=\emptyset$ ?\\
 We found that, answer to this question is in negation  in the case when $U$ is a completely invariant Fatou component. We provide a brief justification for above question.\\
\textbf{Justification}: As $U$ is an unbounded completely invariant component so $\partial U =J(f)$ \cite{baker2'}. Also, $BU(f)\cap J(f)\not = \emptyset$. Hence, these relations together implies that $\partial U \cap BU(f)\not =\emptyset.$ \\

To illustrate the above result we consider following example.
\begin{example}\label{eg3}
Consider one parameter family of exponential maps,  $f(z) = \lambda e^z , \mbox{ for }\lambda\in \left(0,\frac{1}{e}\right).$ We know that Fatou set of $f,  F(f)$ consists of a simply connected completely invariant component $P$ (which is an attracting basin and the dynamics is attracted to the attracting fixed point contained inside $P$, see \cite{dev}) and Julia set of $f, J(f)$ is boundary of $P$. Since $P$ is completely invariant, hence it is unbounded \cite{baker2'}. As intersection of $J(f)$ and  $BU(f)$ is always non-empty, we get  $J(f)=\partial P \cap BU(f)$  is non-empty.
\end{example}

Also, Singh proved the following theorem in the same paper which says that, if $f,g$ are two transcendental entire functions then  $z_0\in BU(f\circ g)$ implies that $g(z_0)\in BU(g\circ f).$\\\
In fact, we show that the converse of this theorem also holds, viz.:
\begin{theorem}\label{thm3'}  
Suppose $f,g$ are two transcendental entire functions. Then $g(z_0)\in BU(g\circ f)$ implies that $z_0\in BU(f\circ g)$. 
\end{theorem}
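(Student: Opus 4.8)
The plan is to reduce everything to the single orbit $\{(f\circ g)^n(z_0)\}$ via the elementary composition identity
\[
g\circ (f\circ g)^n=(g\circ f)^n\circ g,\qquad n\in\N,
\]
which holds for \emph{any} pair of entire functions (it is just the telescoping of $g\circ f\circ g\circ f\circ\cdots$ and requires no permutability). Writing $w_n=(f\circ g)^n(z_0)$ and evaluating the identity at $z_0$ gives $(g\circ f)^n(g(z_0))=g(w_n)$; that is, the orbit of $g(z_0)$ under $g\circ f$ is precisely $g$ evaluated along the orbit $\{w_n\}$ of $z_0$ under $f\circ g$. I would first establish this identity by induction on $n$ and record its consequence for orbits.

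Next I would recast both the hypothesis and the conclusion in terms of subsequences. The hypothesis $g(z_0)\in BU(g\circ f)$ says that $\{g(w_n)\}$ admits an escaping subsequence $g(w_{m_k})\to\infty$ and a bounded subsequence $|g(w_{n_k})|\le M$. The goal $z_0\in BU(f\circ g)$ is that $\{w_n\}$ itself possesses an escaping subsequence and a bounded subsequence. For the escaping part I would argue by contradiction: if $\{w_{m_k}\}$ were bounded it would lie in a compact disc, whose image under the continuous map $g$ is bounded, contradicting $g(w_{m_k})\to\infty$; hence $\{w_{m_k}\}$ is unbounded and so contains a subsequence tending to $\infty$, yielding the required escaping subsequence of $\{w_n\}$.

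The delicate step — and the one where a naive transfer fails — is producing a \emph{bounded} subsequence of $\{w_n\}$. One cannot simply pass from $|g(w_{n_k})|\le M$ to boundedness of $\{w_{n_k}\}$, since a transcendental $g$ may remain bounded along sequences tending to infinity (consider the behaviour of $e^z$ in a left half-plane). The key observation that circumvents this is to shift the index by one: since $w_{n_k+1}=(f\circ g)(w_{n_k})=f(g(w_{n_k}))$ and the points $g(w_{n_k})$ all lie in the compact disc $\{\,|w|\le M\,\}$, continuity of $f$ forces $\{w_{n_k+1}\}=\{f(g(w_{n_k}))\}$ to be bounded. Thus $\{w_n\}$ has the bounded subsequence $\{w_{n_k+1}\}$.

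Combining the two, the orbit $\{w_n\}$ of $z_0$ under $f\circ g$ is unbounded yet does not escape to infinity, i.e.\ $\liminf_n|w_n|<\infty$ and $\limsup_n|w_n|=\infty$, which is exactly the statement $z_0\in BU(f\circ g)$. I expect the main obstacle to be precisely the bounded-subsequence step, where the possible finite asymptotic values of $g$ must be side-stepped by the shift-by-one trick rather than by transporting boundedness through $g$ directly; the remaining ingredients are only the composition identity and the fact that continuous functions carry bounded sets to bounded sets.
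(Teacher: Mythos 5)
Your proposal is correct. It takes a somewhat different route from the paper: you argue directly, converting the two defining subsequences of $BU(g\circ f)$ at $g(z_0)$ into the two defining subsequences of $BU(f\circ g)$ at $z_0$, whereas the paper argues by contrapositive, assuming $z_0\notin BU(f\circ g)$, splitting into the cases $z_0\in I(f\circ g)$ and $z_0\in K(f\circ g)$, and showing these force $g(z_0)\in I(g\circ f)$ and $g(z_0)\in K(g\circ f)$ respectively. Both proofs rest on exactly the same two ingredients — the identity $(g\circ f)^n\circ g=g\circ(f\circ g)^n$ and the fact that an entire function carries a set contained in a compact disc to a bounded set — so the difference is organizational rather than conceptual. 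What your direct version buys is that the one genuinely delicate point is made explicit: boundedness cannot be pulled back through $g$ (a transcendental $g$ may be bounded on an unbounded sequence), and your shift-by-one step $w_{n_k+1}=f(g(w_{n_k}))$ handles this cleanly. The paper's contrapositive quietly relies on the mirror image of the same observation in its $I$-case (to pass from $f\bigl((g\circ f)^{n-1}(g(z_0))\bigr)\to\infty$ to $(g\circ f)^n(g(z_0))\to\infty$ one must rule out a bounded subsequence of the latter by pushing it forward through $f$), a step it does not spell out; your write-up is in that respect the more complete of the two.
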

\begin{proof}
Suppose that $z_0\not\in BU(f\circ g)$ which means 
that either $z_0\in I(f\circ g) $ or $z_0\in K(f\circ g).$ Firstly, suppose that $z_0\in I(f\circ g) $, i.e., $(f\circ g)^{n}(z_0)\to\infty$ as $n\to\infty$ which further implies that $(f(g\circ f)^{n-1}(g(z_0)))\to\infty$ as $n\to\infty$. This shows that $g(z_0)\in I(g\circ f)$ and hence a contradiction. Now, suppose that $z_0\in K(f\circ g)$, i.e., there exists a constant $R>0$ such that $|(f\circ g)^n(z_0)|\leq R \mbox{ for all } n\in\N$. This inequality again leads us to $|(g\circ f)^n (g(z_0))|\leq g(R)=R_0 \mbox{ for all } n\in\N$, i.e., $g(z_0)\in K(g\circ f)$ which is a contradiction. On combining both the observations, we have  $z_0\in BU(f\circ g)$. 
\end{proof}
\section{filled julia set and escaping set of composite entire functions}
To start with, we consider   permutable transcendental entire functions $f$ and $g$ and look for relation of $K(f\circ g)$ with $K(f)$ and $K(g).$ We are also interested in finding relation of $I(f\circ g)$ with $I(f)$ and $I(g).$ It has been already proved that for permutable entire functions $f$ and $g$,  $I(f\circ g)\subset I(f)\cup I(g)$ \cite{d1}. For $K(f\circ g)$, one can think of relations like $K(f)\cup K(g)\subset K(f\circ g)$ or $K(f)\cap K(g)\subset K(f\circ g)$. We show by an example that the first relation mentioned above might not hold in full generality.
\begin{example}\label{eg1}
Consider $f(z)=z+\sin z ,\,\, g(z)=z+\sin z +2\pi.$ Notice that $f(0)=0$  which implies $0\in K(f)$. Also $g^n(0)=2n\pi, n\in \N$, i.e., orbit of $0$ escapes to infinity under $g$  and hence lies in $I(g)$. Now, $(f\circ g)^n(0)= 2n\pi, n\in\N$ which implies that $0\not\in K(f\circ g)$.
\end{example}
In order to explore filled Julia set of a transcendental semigroup $G,$ we need to have some results connecting filled Julia set of composite entire functions with that of individual functions. In this direction, we now establish the following result:
\begin{theorem}\label{prop1}
Let $f$ and $g$ be permutable transcendental entire functions. Then
$K(f)\cap K(g)\subset K(f\circ g)$.
\end{theorem}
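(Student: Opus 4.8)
The plan is to reduce the statement to a boundedness assertion about the single $f\circ g$-orbit of $z_0$ and then to extract a \emph{uniform} bound by a compactness argument. First I would record the elementary but crucial upgrade of permutability to iterates, namely $(f\circ g)^n=f^n\circ g^n$ for every $n\in\N$. This follows by induction: $(f\circ g)^{n+1}=f\circ(g\circ f^n)\circ g^n=f\circ(f^n\circ g)\circ g^n=f^{n+1}\circ g^{n+1}$, using that $g$ commutes with $f^n$. Fixing $z_0\in K(f)\cap K(g)$, there are constants $R_1,R_2>0$ with $|f^n(z_0)|\le R_1$ and $|g^n(z_0)|\le R_2$ for all $n$, and the goal becomes showing that $\{(f\circ g)^n(z_0)\}=\{f^n(g^n(z_0))\}$ is bounded.

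Next I would establish the invariance input that feeds the compactness argument: each point $g^n(z_0)$ again lies in $K(f)$. Indeed, commutativity gives $f^m\circ g^n=g^n\circ f^m$, so the $f$-orbit of $g^n(z_0)$ is $\{g^n(f^m(z_0)):m\ge0\}$, the image under the fixed continuous map $g^n$ of the bounded set $\{f^m(z_0):m\ge0\}\subset\{|w|\le R_1\}$, and is therefore bounded. Hence every $g^n(z_0)$ belongs to $K(f)\cap\{|w|\le R_2\}$, and since $K(f)$ is closed the compact set $A:=\overline{\{g^n(z_0):n\ge0\}}$ is contained in $K(f)$; symmetrically each $f^n(z_0)$ lies in $K(g)$.

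Finally comes the heart of the argument, and the step I expect to be the main obstacle: passing from the fact that every point of the compact set $A\subset K(f)$ has bounded $f$-orbit to a single bound valid along the diagonal $f^n(g^n(z_0))$. The naive estimate fails because the bound on the $f$-orbit of $g^n(z_0)$ a priori depends on $n$, and a doubly indexed family can have bounded rows and columns yet an unbounded diagonal. I would argue by contradiction: if $|f^{n_k}(g^{n_k}(z_0))|\to\infty$, pass to a subsequence with $g^{n_k}(z_0)\to a\in A\subset K(f)$, so that $|f^{n_k}(a)|$ stays bounded while $|f^{n_k}(g^{n_k}(z_0))|\to\infty$. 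When $a\in F(f)$ this is impossible, since $\{f^m\}$ is normal, hence equicontinuous, near $a$, forcing $f^{n_k}(g^{n_k}(z_0))$ to remain close to the bounded sequence $f^{n_k}(a)$. The genuinely delicate case is an accumulation point $a\in J(f)\cap K(f)$, where equicontinuity of $\{f^{n_k}\}$ is exactly what breaks down; resolving it is where I expect to need the complete invariance of $K(f)$ and of $J(f)$ under the permutable map $g$ to control how the $g$-orbit can approach the Julia set of $f$. This equicontinuity and uniformity issue, rather than the formal manipulations, is the crux of the proof.
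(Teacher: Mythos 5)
Your formal reductions are fine: $(f\circ g)^n=f^n\circ g^n$ for permutable maps, each $g^n(z_0)$ lies in $K(f)$ (and, symmetrically, each $f^n(z_0)$ in $K(g)$), and the accumulation set $A=\overline{\{g^n(z_0):n\ge 0\}}$ is a compact subset of $K(f)$. You have also correctly diagnosed that the whole difficulty is the passage from bounded rows and columns to a bounded diagonal, and your normality argument does dispose of the case where the extracted limit $a$ lies in $F(f)$. But the proposal stops exactly at the hard case: when $a\in J(f)\cap K(f)$ you offer no argument, only the expectation that complete invariance of $K(f)$ and $J(f)$ under $g$ will ``control'' the approach to $J(f)$. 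Since $K(f)\cap J(f)$ is in general nonempty (it contains, for instance, all repelling periodic points) and $\{f^n\}$ is not equicontinuous at any point of $J(f)$, this is not a technical loose end but the entire content of the theorem; as written, the proof is incomplete.

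The paper closes (or at least attempts to close) this case by a different mechanism, which is precisely the ingredient you are missing: the inclusion $I(f\circ g)\subset I(f)\cup I(g)$ for permutable entire functions, quoted from \cite{d1}. The paper's argument runs over the same array $f^j(g^k(z_0))$, notes that each row is bounded, and then argues that if the set of limit points were unbounded one would obtain $z_0\in I(f\circ g)$, hence $z_0\in I(f)\cup I(g)$, contradicting $z_0\in K(f)\cap K(g)$; in the bounded cases it concludes $z_0\in K(f\circ g)$. So rather than fighting for equicontinuity on $J(f)$, the intended strategy is to convert ``the diagonal is unbounded'' into an escaping-set statement and invoke the known composition result for escaping sets. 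If you want to repair your write-up, that is the lemma to import --- though you should be aware that an unbounded $(f\circ g)$-orbit a priori only yields $z_0\in I(f\circ g)\cup BU(f\circ g)$, so the $BU(f\circ g)$ alternative still has to be excluded; the paper's own Case 3 passes over this point rather quickly.
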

\begin{proof}
Suppose that $z_0\in K(f)\cap K(g)$. Consider the array of  following sequences:\\
$\begin{array}{ccccc}
f(g(z_0)) &f^2(g(z_0))&f^3(g(z_0))\cdots\\
f(g^2(z_0))&f^2(g^2(z_0))&f^3(g^2(z_0))\cdots\\
\vdots&\vdots&\vdots\\
f(g^k(z_0))&f^2(g^k(z_0))&f^3(g^k(z_0))\cdots\\
\vdots&\vdots&\vdots

\end{array}$\\
Now, the first row of the array will converge to say, $l$ for, if it does not, then $g(z_0)\in I(f)$ which implies that $z_0\in I(f)$, a contradiction. Using continuity of $g$, we can say that second row will converge to $g(l)$ and so on. Let $X$ be the set containing all these limit points. The following are possible cases for set $X$.\\
Case 1: $l$ is periodic point of $g$. In particular, suppose that $l$ is a fixed point of $g$. Hence every sequence of the array will be bounded which shows that $z_0\in K(f\circ g)$.\\
Case 2: $X$ is infinite and bounded. Then, by Weierstrass theorem, $X$ has a limit point which also shows that $z_0\in K(f\circ g)$.\\
Case 3: $X$ is unbounded, so that there exists a subsequence which  escapes to infinity. This in turn implies that $z_0\in I(f\circ g)\subset I(f) \cup I(g)$ (see \cite{d1}) which leads to  a contradiction.\\
From all the above cases, we obtain that  $K(f)\cap K(g)\subset K(f\circ g).$
\end{proof}

The second author \cite{dinesh3} investigated the notion of escaping set $I(G)$ of a transcendental semigroup $G.$ In order to formulate results on $I(G),$ it is important to have some results connecting escaping set of composite entire functions with that of individual functions.
For two permutable  transcendental entire functions  $f$ and $g,$ it is not necessarily true that $I(f)$ is invariant under $g$ and vice-versa. However,  the next result establishes that for two  permutable transcendental entire functions  $f$ and $g,$ their respective escaping sets are invariant under both $f$ as well as $g$ if the underlying functions do not possess finite asymptotic values.

\begin{theorem}\label{thm3}
Let $f,g$ be permutable transcendental entire functions such that $f$ and $g$ do not have any finite asymptotic values. Then, $f(I(g)\subset I(g)$.
\end{theorem}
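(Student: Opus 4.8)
The plan is to reduce the assertion to a statement about how $f$ behaves along an escaping orbit of $g$, and only then to bring in the hypothesis on asymptotic values. Since $f$ and $g$ commute, a trivial induction gives $g^{n}\circ f=f\circ g^{n}$ for every $n\in\N$. Hence, for a point $w\in I(g)$,
\[
g^{n}(f(w))=f(g^{n}(w)),\qquad n\in\N .
\]
Because $w\in I(g)$ we have $g^{n}(w)\to\infty$, so establishing $f(w)\in I(g)$ is exactly the claim that the images $f(g^{n}(w))$ also tend to infinity. Thus the whole theorem is equivalent to the following localised statement: along the escaping orbit $z_{n}:=g^{n}(w)\to\infty$ one has $f(z_{n})\to\infty$. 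The symmetric statement $g(I(f))\subset I(f)$ would follow by interchanging the roles of $f$ and $g$, which is why both functions are assumed free of finite asymptotic values.

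First I would argue by contradiction. If $f(z_{n})\not\to\infty$, then after passing to a subsequence we may assume $z_{n_{k}}\to\infty$ while $f(z_{n_{k}})$ remains bounded, and, refining once more, $f(z_{n_{k}})\to a$ for some finite $a$. The goal is to show that this forces $a$ to be a finite asymptotic value of $f$, which contradicts the hypothesis; this is the point at which I would invoke the assumption that $f$ has no finite asymptotic value. Concretely, I would try to build from the points $z_{n_{k}}$ a curve $\gamma$ tending to $\infty$ on which $f$ converges to $a$.

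The hard part, and the step I would have to justify with genuine care, is precisely this passage from a sequence to an asymptotic path. Having $z_{n_{k}}\to\infty$ with $f(z_{n_{k}})\to a$ is by itself far too weak to conclude anything: applying the Casorati--Weierstrass theorem to $f(1/\zeta)$ at the essential singularity $\zeta=0$ shows that the cluster set of a transcendental entire function at $\infty$ is all of $\hat{\C}$, so \emph{every} finite value is the limit of $f$ along some sequence tending to infinity, asymptotic value or not. To close the gap I would exploit that the $z_{n_{k}}$ are not an arbitrary sequence but lie on a single escaping orbit of $g$: using that $g$ too has no finite asymptotic value to control the route by which the orbit reaches $\infty$, I would attempt to join consecutive orbit points by short arcs on which $f$ stays near $a$, producing a genuine path $\gamma\to\infty$ with $f|_{\gamma}\to a$ and hence the forbidden asymptotic value. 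A cleaner alternative, if it can be secured, is to first prove $I(f)=I(g)$ for such permutable pairs; the desired invariance is then immediate from the complete invariance of the escaping set, since $f(I(g))=f(I(f))\subset I(f)=I(g)$.
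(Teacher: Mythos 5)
Your overall strategy is the same as the paper's: use commutativity to write $g^{n}(f(w))=f(g^{n}(w))$, assume the image orbit fails to escape, extract a subsequence $z_{n_k}=g^{n_k}(w)\to\infty$ along which $f$ stays bounded (indeed converges to some finite $a$), and try to contradict the hypothesis that $f$ has no finite asymptotic value. But your proposal does not actually close the step you yourself identify as the crux. A finite asymptotic value requires a \emph{path} $\gamma\to\infty$ along which $f\to a$; as you correctly note via Casorati--Weierstrass, a sequence $z_{n_k}\to\infty$ with $f(z_{n_k})\to a$ exists for \emph{every} $a$ and for \emph{every} transcendental entire $f$, so by itself it proves nothing. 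The two repairs you sketch are not carried out and neither is obviously available: joining consecutive orbit points by ``short arcs on which $f$ stays near $a$'' has no mechanism behind it (the hypothesis that $g$ lacks finite asymptotic values constrains $g$'s value distribution, not the geometry of an orbit of $g$ in the plane, and there is no reason $f$ should remain near $a$ between the orbit points); and the alternative route via $I(f)=I(g)$ is simply asserted as a wish, with no argument that this identity holds for such pairs. So as written the proposal is a correct reduction plus an honestly acknowledged but unfilled hole.

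You should know that the paper's own proof makes exactly the leap you were worried about, without comment: from $|f(g^{n}(w_0))|\le R$ for all $n$ (with $g^{n}(w_0)\to\infty$, since $w_0\in I(g)$) it concludes directly that ``$f$ has a finite asymptotic value,'' and the $BU(g)$ case does the same along the subsequence $m_k$. This inference is not valid in general: boundedness of $f$ on an unbounded sequence --- indeed even on an unbounded curve, as $\sin$ on the real axis shows ($\sin$ is bounded on $\R$ yet has no finite asymptotic value, its only finite singular values being the critical values $\pm 1$) --- does not produce a finite asymptotic value. So you have located precisely the weak point of the argument; the difference is that the paper asserts the missing implication while you flag it, and neither of you supplies a proof of it.
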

\begin{proof}  We  prove this result by contradiction. Suppose that $z_0\in f(I(g))$ but $z_0\not\in I(g)$ i.e, either $z_0\in K(g)  \mbox{ or } z_0\in BU(g)$. First, suppose that $z_0\in K(g)$ which means that there exists $R>0$ such that $|g^n(z_0)|\leq R \mbox{, for all } n\in \N$. Since, $z_0\in f(I(g))$  therefore, there exists $w_0\in I(g)$ such that $z_0=f(w_0)$. Using this, we can say that  
 $|f(g^n(w_0))|\leq R \mbox{  , for all } n\in \N$ which shows that $f$ has a finite asymptotic value and hence gives a contradiction. Now suppose that $z_0\in BU(g)$. By definition of $BU(g)$, there exists two subsequences $\{n_k\}, \{m_k\} \mbox{ and a constant } A>0 \mbox{ such that } g^{n_k}(z_0)\to\infty \mbox{ as } k\to\infty \mbox{ and } |g^{m_k}(z_0)|<A \mbox{ for all } k=1,2,3,\ldots$ This implies that $|f(g^{m_k}(w_0))|<A \mbox{ for all } k=1,2,3,\ldots$ which further implies that $w_0$ is a finite asymptotic value of $f.$ This is a contradiction and hence the result.
\end{proof}
\begin{remark}\label{rem1}
As as consequence of Theorem \ref{thm3} one obtains, $f^n((I(g)))\subset I(g),$\, $n\in\N.$
\end{remark}
\begin{remark}\label{rem2}
It was shown in \cite{d1} that for two permutable entire functions $f$ and $g,$ $I(g)\subset f(I(g))$ which further implies $I(g)\subset f^n((I(g)))$ for all $n\in\N.$ Combining this with above result, we conclude that $f^n((I(g)))=I(g)$  for all $n\in\N.$
\end{remark}
Analogously, one obtains $g(I(f))\subset I(f)$ and hence $g^n(I(f))\subset I(f)$ for all $n\in\N.$ This, in particular, establishes that for two  permutable transcendental entire functions $f$ and $g,$  $I(f)$ and $I(g)$ are completely invariant under both $f$ and $g.$  We illustrate Theorem \ref{thm3} with an example:
 
\begin{example}\label{eg2}
 Let  $f(z)=z+1+e^{-z},\, g(z)=f(z)+2\pi\iota$. Using above result, we get that $f(I(g)\subset I(g)$ as $f$ and $g$ do not have any finite asymptotic values. However, we also verify this analytically. To this end, suppose that $z_0\in f(I(g)$, i.e., there exists $w_0\in I(g)$ such that $z_0=f(w_0)$. Using induction, we obtain that
 \begin{equation}\notag
\begin{split}
 |g^{n-1}(z_0)|
&=|f^{n}(w_0)+n  2\pi\iota-2\pi\iota|\\
 &=|g^n(w_0)-2\pi\iota|\\
 &\geq|g^n(w_0)|-|2\pi\iota|
 \end{split}
\end{equation}
  which tends to infinity as $n$ tends to infinity as $w_0\in I(g).$ 
\end{example}
Under the hypothesis of Theorem \ref{thm3}, we now establish that the escaping set of composition of two permutable entire functions  contains the union of escaping set of the two functions.
\begin{theorem}\label{thm4'}
Let  $f$ and $g$ be permutable transcendental entire functions such that $f$ and $g$ do not have any finite asymptotic values. Then $I(f)\cup I(g)\subset I(f\circ g).$
\end{theorem}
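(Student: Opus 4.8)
The plan is to exploit permutability to unfold the iterates of $f\circ g$, and then to run the same asymptotic‑value contradiction as in the proof of Theorem~\ref{thm3}. Since $f$ and $g$ commute we in fact have $f\circ g=g\circ f$, so it suffices to prove the single inclusion $I(g)\subset I(f\circ g)$; swapping the roles of $f$ and $g$ then yields $I(f)\subset I(f\circ g)$, and the two together give the claim. The starting point is the identity $(f\circ g)^n=f^n\circ g^n$ for every $n\in\N$, an easy induction from $g\circ f=f\circ g$. Thus, for $z_0\in I(g)$,
\[
(f\circ g)^n(z_0)=f^n\bigl(g^n(z_0)\bigr),\qquad g^n(z_0)\to\infty .
\]

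First I would record a confinement fact: the whole forward orbit of $z_0$ under $f\circ g$ stays in $I(g)$. Indeed $g^n(z_0)\in I(g)$ by forward invariance of $I(g)$ under $g$, and by Theorem~\ref{thm3} together with Remark~\ref{rem1} we have $f^n(I(g))\subset I(g)$; hence $(f\circ g)^n(z_0)=f^n(g^n(z_0))\in I(g)$ for all $n$. This keeps the dynamics inside the set where the invariance results of the previous section apply.

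Next I would argue by contradiction. If $z_0\notin I(f\circ g)$, then $z_0\in K(f\circ g)\cup BU(f\circ g)$, so in either case there is a subsequence $\{n_k\}$ and a constant $R>0$ with
\[
\bigl|f^{n_k}\bigl(g^{n_k}(z_0)\bigr)\bigr|\leq R\quad\text{for all }k,\qquad\text{while}\qquad g^{n_k}(z_0)\to\infty .
\]
The goal is to turn this into a finite asymptotic value of $f$ (or, symmetrically, of $g$), which is exactly the contradiction engine used in Theorem~\ref{thm3}: a sequence tending to infinity whose images are trapped in a bounded set forces $f$ to approach a finite limit along a path to $\infty$, against the hypothesis of no finite asymptotic values. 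I would import that argument verbatim, combining it with the confinement step and with the equality $f^n(I(g))=I(g)$ from Remark~\ref{rem2} to guarantee that the points involved genuinely escape under $g$.

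The hard part will be the passage ``bounded images of a sequence going to infinity $\Rightarrow$ finite asymptotic value'' carried out in the present setting, where the \emph{order of iteration} $n_k$ grows with $k$ rather than staying fixed at a single application of $f$. This diagonal mismatch is the only non‑formal point, and it is precisely the analogue of the delicate step already accepted in Theorem~\ref{thm3}; I would therefore phrase it in the same way, using permutability to display the outermost map at a single level via $f^{n_k}(g^{n_k}(z_0))=f\bigl(g\bigl((f\circ g)^{n_k-1}(z_0)\bigr)\bigr)$ with the inner points lying in $I(g)$. Once the contradiction is secured we obtain $z_0\in I(f\circ g)$, hence $I(g)\subset I(f\circ g)$, and the symmetric statement gives $I(f)\subset I(f\circ g)$, so that $I(f)\cup I(g)\subset I(f\circ g)$. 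Together with the reverse inclusion $I(f\circ g)\subset I(f)\cup I(g)$ from \cite{d1}, this also yields the equality $I(f\circ g)=I(f)\cup I(g)$ announced in the abstract.
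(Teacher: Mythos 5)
Your reduction to the single inclusion $I(g)\subset I(f\circ g)$ via $f\circ g=g\circ f$, the identity $(f\circ g)^n=f^n\circ g^n$, and the confinement observation $(f\circ g)^n(z_0)\in I(g)$ are all fine, and your overall route (contradiction plus an asymptotic-value argument) differs from the paper's, which argues directly: for $z_0\in I(f)$ it forms the array with rows $\{f^j(g^k(z_0))\}_j$, notes $g^k(z_0)\in I(f)$ by the invariance from Theorem~\ref{thm3}, concludes each row escapes, and then reads off the diagonal $\{f^n(g^n(z_0))\}$. However, your argument has a genuine gap at exactly the point you flag as ``the hard part,'' and the gap is not closed. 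From $z_0\notin I(f\circ g)$ you correctly extract a subsequence with $\bigl|f^{n_k}(g^{n_k}(z_0))\bigr|\le R$ while $g^{n_k}(z_0)\to\infty$, but this does \emph{not} produce a finite asymptotic value of $f$: an asymptotic value concerns a single application of $f$ along a path to infinity, whereas here the iteration order $n_k$ grows with $k$. High iterates $f^{n_k}$ can return faraway points to a bounded set without $f$ itself being bounded near infinity, so the ``contradiction engine'' of Theorem~\ref{thm3} (where only one application of $f$ is involved) cannot be imported verbatim.

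Your proposed repair, rewriting $f^{n_k}(g^{n_k}(z_0))=f\bigl(g\bigl((f\circ g)^{n_k-1}(z_0)\bigr)\bigr)$ so that the outermost map is a single $f$, does not fix this: to conclude that $f$ has a finite asymptotic value you would need the inner points $g\bigl((f\circ g)^{n_k-1}(z_0)\bigr)$ to tend to infinity, and nothing you have established gives that. The confinement fact $(f\circ g)^{n}(z_0)\in I(g)$ only says the $g$-orbit of each such point eventually escapes, not that the point itself (or its image under one application of $g$) is large; indeed, if $(f\circ g)^{n_k-1}(z_0)$ and its $g$-image both stay bounded you get no contradiction at all from this display. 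Until you supply an argument that some sequence fed into a \emph{single} application of $f$ (or of $g$) tends to infinity while its images stay bounded, the contradiction is not secured, and the inclusion $I(g)\subset I(f\circ g)$ remains unproved by this route.
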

\begin{proof}

Suppose  $z_0\in I(f)\cup I(g)$. This implies that $z_0\in I(f)$ or $z_0\in I(g).$ Firstly, consider the case when $z_0\in I(f).$ Consider the array of  following sequences:\\
$\begin{array}{ccccc}
f(g(z_0)) &f^2(g(z_0))&f^3(g(z_0))\cdots\\
f(g^2(z_0))&f^2(g^2(z_0))&f^3(g^2(z_0))\cdots\\
\vdots&\vdots&\vdots\\
f(g^k(z_0))&f^2(g^k(z_0))&f^3(g^k(z_0))\cdots\\
\vdots&\vdots&\vdots

\end{array}$\\

Using Theorem \ref{thm3}, we have $g^n(I(f))\subset I(f)$ for all $n\in\N$ and so $g^k(z_0)\in I(f)$ for all $k\in\N.$ This implies that each of the rows escapes to infinity and as a result the diagonal sequence $\{(f\circ g)^n(z_0): n\in\N\}$ also escapes to infinity. This establishes that $z_0\in I(f\circ g).$
\\
In the second case when $z_0\in I(g),$ we interchange the role of $f$ and $g$ in above array of sequence, and using the fact that $f^n(I(g))\subset I(g),$ we arrive at the conclusion and hence the result.
\end{proof}
\begin{remark}\label{rem3}
It was shown in \cite{d1} that for two permutable entire functions $f$ and $g,\, I(f \circ g)\subset I(f)\cup I(g).$ This, together with above result, provides a class of permutable entire functions $f$ and $g$ that do not have any finite asymptotic values  and which satisfies $I(f \circ g)=I(f)\cup I(g).$ 
\end{remark}

Using Theorem \ref{thm3} and \ref{thm4'}, we now give an important relation between Bungee set of composite entire function with those of the individual functions. This relation will be useful if one wants to study Bungee set of a transcendental semigroup.

\begin{theorem}\label{sec2, thm1}
Let $f$ and $g$ be two permutable entire functions such that both $f$ and $g$ do not have any finite asymptotic values. Then $BU(f\circ g)\subset BU(f)\cap BU(g)\subset BU(f)\cup BU(g)$.
\end{theorem}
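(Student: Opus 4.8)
The second inclusion $BU(f)\cap BU(g)\subset BU(f)\cup BU(g)$ is immediate, since any set sits inside the corresponding union, so the whole content is the first inclusion. The plan is to fix $z_0\in BU(f\circ g)$ and show that $z_0$ belongs to $BU(f)$ and to $BU(g)$ separately. First I would invoke Remark \ref{rem3}, which gives $I(f\circ g)=I(f)\cup I(g)$; since $BU(f\circ g)$ is disjoint from $I(f\circ g)$, this yields at once $z_0\notin I(f)$ and $z_0\notin I(g)$. Because $\C=I(h)\cup K(h)\cup BU(h)$ is a disjoint decomposition for every entire $h$, proving $z_0\in BU(f)$ now amounts to ruling out $z_0\in K(f)$, and proving $z_0\in BU(g)$ amounts to ruling out $z_0\in K(g)$. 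Thus the theorem reduces to the two statements $z_0\notin K(f)$ and $z_0\notin K(g)$.

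Before attacking these I would record some forward invariance under the partner map. Permutability and continuity give $g(K(f))\subset K(f)$: if $|f^n(z)|\le R$ for all $n$, then $f^n(g(z))=g(f^n(z))$ lies in the compact set $g(\{|w|\le R\})$, so $g(z)\in K(f)$, and iterating shows $z_0\in K(f)$ forces $g^k(z_0)\in K(f)$ for all $k$. Moreover $g(BU(f))\subset BU(f)$: for $z\in BU(f)$ one has $g(z)\notin I(f)$, since $g$ is bounded along the bounded subsequence of the $f$-orbit of $z$, and $g(z)\notin K(f)$, since otherwise $g$ would be bounded along the escaping subsequence of that orbit, producing a finite asymptotic value of $g$ exactly as in the proof of Theorem \ref{thm3}. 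Together with $g(I(f))\subset I(f)$ from Theorem \ref{thm3}, this means $g$ preserves each of the three $f$-classes $I(f),K(f),BU(f)$, and symmetrically $f$ preserves $I(g),K(g),BU(g)$. Since $z_0\notin I(f)$ and $z_0\notin I(g)$, the ordered pair of classes of $z_0$ under $f$ and under $g$ lies in $\{K,BU\}\times\{K,BU\}$, so I must eliminate the three types $(K,K),(K,BU),(BU,K)$. The first is immediate: $z_0\in K(f)\cap K(g)\subset K(f\circ g)$ by Theorem \ref{prop1}, contradicting $z_0\in BU(f\circ g)$.

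For the mixed cases I would use the array $f^{j}(g^{k}(z_0))=g^{k}(f^{j}(z_0))$ exactly as in Theorems \ref{prop1} and \ref{thm4'}, whose diagonal is $(f\circ g)^n(z_0)$. In the type $(K,BU)$, the membership $z_0\in K(f)$ makes every row a bounded $f$-orbit, since $g^{k}(z_0)\in K(f)$, while the invariance of the previous paragraph places each $f^{j}(z_0)$ in $BU(g)$, so every column is the $g$-orbit of a bungee point of $g$; the statement to be proved is that this forces $z_0\in K(f\circ g)$, again contradicting $z_0\in BU(f\circ g)$. Here lies the main obstacle: bounded rows give only a bound $R_k=\sup_j|f^{j}(g^{k}(z_0))|$ that may grow with $k$, so one cannot read off boundedness of the diagonal $(f\circ g)^n(z_0)$ term by term. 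To close this gap I would pass to the $\omega$-limit set $L=\omega(z_0,f)\subset\{|w|\le R\}$ and analyse the high iterates $g^{n_k}$ near $L$, using normality of $\{g^n\}$ on the Fatou set together with the absence of finite asymptotic values of $g$ to exclude a subsequence $g^{n_k}(f^{n_k}(z_0))\to\infty$ with $f^{n_k}(z_0)\to L$. The type $(BU,K)$ follows by interchanging the roles of $f$ and $g$. Eliminating all three types leaves only $(BU,BU)$, i.e. $z_0\in BU(f)\cap BU(g)$, which is precisely the asserted inclusion.
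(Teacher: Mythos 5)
Your reduction is sound as far as it goes: the second inclusion is trivial, Remark \ref{rem3} does rule out $z_0\in I(f)$ and $z_0\in I(g)$, the invariance statements $g(K(f))\subset K(f)$, $g(I(f))\subset I(f)$, $g(BU(f))\subset BU(f)$ are correct under the hypotheses (arguing as in the paper's Theorem \ref{thm3}), and the type $(K,K)$ is eliminated cleanly by Theorem \ref{prop1}. But the proposal is not a proof, because the mixed types $(K,BU)$ and $(BU,K)$ --- which is where the entire content of the theorem sits --- are left open by your own admission. The repair you sketch does not close the gap: you propose to control $g^{n_k}$ near the $\omega$-limit set $L=\omega(z_0,f)$ via normality of $\{g^n\}$ on the Fatou set, but nothing forces $L$ to lie in $F(g)$. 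Indeed $BU(g)\cap J(g)\neq\emptyset$ always, and points of $K(f)$ have no reason to avoid $J(g)$, so normality is unavailable precisely where you need it. Until the mixed case is eliminated, the inclusion $BU(f\circ g)\subset BU(f)\cap BU(g)$ is not established.

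For comparison, the paper proceeds differently: fixing subsequences $\{n_k\},\{m_k\}$ witnessing $z_0\in BU(f\circ g)$, it cases on the behaviour of $g^{n_k}(z_0)$ and $g^{m_k}(z_0)$, using $f^k(I(g))\subset I(g)$ to contradict the bounded subsequence of $(f\circ g)^n(z_0)$ when the whole $g$-orbit of $z_0$ escapes, and $f(K(g))\subset K(g)$ to contradict the escaping subsequence when the $g$-orbit is bounded. Its bounded case asserts a bound $|g^n(f^{n_k}(z_0))|<A$ that is uniform in $k$, which is exactly the uniformity you flag as the obstacle (your $R_k$ possibly growing with $k$). So you have correctly located the delicate point of the argument; what you have not done is supply the step that resolves it, and without that step the proposal does not prove the theorem.
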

\begin{proof}
 Suppose that $z_0\in BU(f\circ g).$ This implies there exists two subsequences $\{n_k\},\{m_k\}$ and a constant $R>0$ such that $(f\circ g)^{n_k}(z_0)\to\infty$ as $k\to\infty$ and $|(f\circ g)^{m_k}(z_0)|<R \mbox{ for all } k\in \mathbb{N}$. It is enough to show that $z_0\in BU(g)$ (as the proof of $z_0\in BU(f)$ follows on similar lines). For this, we will show that $z_0$ cannot be in $I(g)$ as well as $K(g).$ We divide the proof into several cases:\\
Case(1): Suppose that $g^{n_k}(z_0)\to\infty$ and $g^{m_k}(z_0)\to\infty$ as $k\to\infty$.\\
From here, we can assume that $g^n(z_0)\to\infty$ as $n\to\infty$
(for, if there exists a subsequence $\{p_k\}$ such 
that $g^{p_k}(z_0)$ stays bounded as $k\to\infty$ then 
this will imply that $z_0\in BU(g)$). Also, using Theorem \ref{thm3}, $f^k(I(g)\subset I(g)$ for every  $k\in\mathbb{N}$. In particular, $f^{m_k}(z_0)\in I(g)$ which implies that $(g\circ f)^{m_k}(z_0)\to\infty $ as $k\to\infty$ which is a  contradiction. \\
Case(2): Suppose that $g^{n_k}(z_0)\to a$ and $g^{m_k}(z_0)\to\infty$ as $k\to\infty$. This again shows that $z_0\in BU(g)$ and the result follows.\\
Case(3): Suppose that $g^{n_k}(z_0)\to a$ and $g^{m_k}(z_0)\to b$ as $k\to\infty.$   It follows that $z_0\in K(g)$ (for,  if there exists a subsequence $\{p_k\}$ for 
which $g^{p_k}(z_0)\to\infty$ as $k\to\infty$ then we are back to Case(2) 
 and hence $z_0\in BU(g)$). It can be easily observed that $f(K(g)\subset K(g)$. In particular, $f^{n_k}(z_0)\in K(g)$, i.e., there exists $A>0$ such that $|g^n(f^{n_k}(z_0))|<A \mbox{ for all } n\in\mathbb{N}$. This implies that $|(f\circ g)^{n_k}(z_0)|<A \mbox{ for all } k\in\mathbb{N}$ which is again a contradiction. Hence, from the above cases, we conclude that $z_0\in BU(g).$  The proof of $z_0\in BU(f)$ follows on similar lines. \\
Thus, we obtain that $BU(f\circ g)\subset BU(f)\cap BU(g)\subset BU(f)\cup BU(g)$.
\end{proof}
  
  Bergweiler and Wang \cite{berg4}, showed that for two entire functions $f$ and $g,$  $z\in F(f\circ g)$ if and only if $f(z)\in F(g\circ f).$
We now prove a  similar result which gives a relation between $K(f\circ g)$ and $K(g\circ f)$. 
\begin{theorem}\label{thm3''}  
Let $f,g $ be two transcendental entire functions. Then  $z_0\in K(f\circ g)$ if and only if  $ g(z_0)\in K(g\circ f)$.
\end{theorem}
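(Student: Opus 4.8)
The plan is to exploit the elementary semi-conjugacy $g\circ(f\circ g)=(g\circ f)\circ g$, which says that $g$ intertwines the two compositions, and to deduce everything from continuity of $f$ and $g$. First I would record the iterated form of this identity,
\begin{equation}\notag
g\circ (f\circ g)^n = (g\circ f)^n\circ g, \qquad n\in\N,
\end{equation}
established by a one-line induction whose step reads $(g\circ f)^{n+1}\circ g = (g\circ f)\circ\big((g\circ f)^n\circ g\big) = (g\circ f)\circ g\circ (f\circ g)^n = g\circ (f\circ g)^{n+1}$. Evaluating at $z_0$ yields the relation I will use throughout,
\begin{equation}\notag
(g\circ f)^n(g(z_0)) = g\big((f\circ g)^n(z_0)\big), \qquad n\in\N.
\end{equation}

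For the forward implication I would assume $z_0\in K(f\circ g)$, so that $|(f\circ g)^n(z_0)|\le R$ for some $R>0$ and all $n\in\N$. Since $g$ is continuous, it maps the closed disk of radius $R$ into some closed disk of radius $R'$; by the key relation $|(g\circ f)^n(g(z_0))|=|g((f\circ g)^n(z_0))|\le R'$ for every $n$, and hence $g(z_0)\in K(g\circ f)$. This direction mirrors the bound $|(g\circ f)^n(g(z_0))|\le g(R)$ already used in the proof of Theorem \ref{thm3'}.

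The converse is the step that needs more care, because boundedness of the values $g\big((f\circ g)^n(z_0)\big)$ does not on its face force the points $(f\circ g)^n(z_0)$ to be bounded: $g$ may well carry an unbounded set into a bounded one, so one must resist the temptation to invert $g$. The resolution is to feed the bounded $g$-values back through $f$. Assuming $g(z_0)\in K(g\circ f)$, the key relation supplies $M>0$ with $\big|g\big((f\circ g)^{n}(z_0)\big)\big|\le M$ for all $n\in\N$. For $n\ge 1$ I would rewrite $(f\circ g)^n(z_0)=f\big(g\big((f\circ g)^{n-1}(z_0)\big)\big)$, whose inner argument has modulus at most $M$; as $f$ sends the closed disk of radius $M$ into a bounded set, every orbit point with $n\ge 1$ is bounded, and adjoining the single point $z_0$ leaves the whole orbit bounded, giving $z_0\in K(f\circ g)$.

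I expect this converse to be the only real obstacle, and it is dispatched by the observation that each orbit point beyond the zeroth is itself an $f$-image of a value already controlled by the hypothesis; beyond that, the argument is entirely formal, resting on the semi-conjugacy identity and the fact that continuous functions map compact sets to bounded sets.
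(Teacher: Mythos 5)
Your proof is correct and rests on the same two ingredients as the paper's own argument --- the semi-conjugacy $(g\circ f)^n\circ g=g\circ(f\circ g)^n$ and the fact that an entire function maps bounded sets into bounded sets --- the only difference being that you argue directly while the paper argues by contradiction, splitting the complement of $K$ into $I$ and $BU$. Your direct formulation is cleaner and makes explicit the continuity step (and the decomposition $(f\circ g)^n(z_0)=f\bigl((g\circ f)^{n-1}(g(z_0))\bigr)$ for the converse) that the paper leaves implicit.
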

\begin{proof}
 Let $z_0\in K(f\circ g).$ Then there exists a constant $A>0$ such that $|(f\circ g)^n(z_0)|\leq A \mbox{ for all } n\in\N$. Suppose that  $g(z_0)\not\in K(g\circ f).$ Then, either $g(z_0)\in BU(g\circ f)$ or $ g(z_0)\in I(g\circ f)$.\\
Case1: Let $g(z_0)\in BU(g\circ f).$ Then there exists two subsequences $\{n_k\}, \{m_k\}$ and a constant $R>0$ such that $(g\circ f)^{n_k}(g(z_0))\to\infty$ as $k\to\infty$ and $|(g\circ f)^{m_k}(g(z_0))|<R $ for all $k=1,2,3,\ldots$ In particular, for any $M$ there exists $k_0\in\N$ such that $|(g(f\circ g)^{n_k}(z_0)|>M$ for all $k\geq k_0$ which shows that $z_0\not\in K(f\circ g)$ which is a contradiction.\\
Case2: Now let $g(z_0)\in I(g\circ f)$, i.e., $(g\circ f)^n(g(z_0))\to\infty$ as $n\to\infty$ which in turn implies that $z_0\in I(f\circ g)$ which is again  a contradiction.
On combining both the above cases, we get that $ g(z_0)\in K(f\circ g)$.\\
Conversely, suppose that $ g(z_0)\in K(f\circ g)$. We show that $z_0\in K(f\circ g)$. We prove this again by contradiction. Suppose that $z_0\in BU(f\circ g).$ Therefore, there exists two subsequences $\{n_k\}, \{m_k\}$ and a constant $R>0$ such that $(f\circ g)^{n_k}(z_0)\to\infty$ as $k\to\infty$ and $|(f\circ g)^{m_k}(z_0)|<R $ for all $k=1,2,3,\ldots$ In particular, for any $M$ there exists $k_0\in\N$ such that $|(f(g\circ f)^{{n_k}-1}(g(z_0))|>M$ for all $k\geq k_0$ which shows that $g(z_0)\not\in K(g\circ f)$ which is a contradiction. Now, suppose that $ z_0\in I(f\circ g)$, i.e., $(f\circ g)^n(z_0)\to\infty$ as $n\to\infty$ which in turn implies that $g(z_0)\in I(g\circ f)$ which is  a contradiction. In both the situations, we arrive at a contradiction. Hence, we get that $z_0\in K(f\circ g)$ which completes the proof.
\end{proof}

\end{document}